\newcommand{\C}{\mathbb{C}}
\newcommand{\F}{\mathbb{F}}
\newcommand{\fg}{\mathfrak{g}}
\newcommand{\OO}{\mathbb{O}}
\newcommand{\Z}{\mathbb{Z}}
\newcommand{\msf}[1]{\mathsf{#1}}
\newcommand{\op}[1]{\operatorname{\msf{#1}}}
\newtheorem{theorem}{Theorem}
\newtheorem{lemma}[theorem]{Lemma}
\theoremstyle{definition}
\newtheorem{definition}[theorem]{Definition}
\newtheorem{topic}[theorem]{}
\theoremstyle{remark}
\newtheorem{remark}[theorem]{Remark}
\begin{document}
%
%
%
%
\title[]{Root space decomposition of $\mathfrak{g}_2$ from octonions}
\author{Tathagata Basak}
\address{Department of Mathematics\\Iowa State University \\Ames, IA 50011}
\email{tathagat@iastate.edu}
\urladdr{http://orion.math.iastate.edu/tathagat}
\keywords{Exceptional Lie algebras, Chevalley basis, Octonions, derivations}
\subjclass[2010]{%
Primary:16W25 
; Secondary: 16W10
, 17B25
}
\date{July 20, 2017}
\begin{abstract}
We describe a simple way to  write down explicit derivations of octonions
that form a Chevalley basis of $\mathfrak{g}_2$. This uses the description of 
octonions as a twisted group algebra of the finite field $\mathbb{F}_8$.
Generators of $\op{Gal}(\mathbb{F}_8/\mathbb{F}_2)$ act on the roots
as $120$--degree rotations and complex conjugation acts
as negation.
\end{abstract}
\maketitle
%
%
\begin{topic}{\bf Introduction.} 
Let $\OO$ be the unique real nonassociative eight dimensional division algebra of octonions.
It is well known that the Lie algebra of derivations $\op{Der}(\OO)$ 
is the compact real form of the Lie algebra of type $G_2$.
Complexifying we get an identification of $\op{Der}(\OO) \otimes \C$
with the complex simple Lie algebra $\mathfrak{g}_2$. 
 The purpose of this short note is to make this identification transparent by
writing down simple formulas for a set of 
derivations of $\OO \otimes \C$ that form a Chevalley
basis of $\mathfrak{g}_2$ (see Theorem \ref{th-Chevalley-basis}). 
This gives a quick construction of 
$\mathfrak{g}_2$ acting on $\op{Im}(\OO) \otimes \C$ because
the root space decomposition is visible from the definition.
The highest weight vectors of finite dimensional irreducible representations of $\mathfrak{g}_2$
can also be  easily described in these terms.
For an alternative construction
of $\mathfrak{g}_2$ as derivations of split octonions, see \cite{KT:OD}, pp. 104--106.
\par
Wilson \cite{W:CR} gives an elementary construction of the compact real form of $\mathfrak{g}_2$
with visible $2^3 \cdot L_3(2)$ symmetry.
This note started as a reworking of that paper in light of the definition of $\OO$
from \cite{B:O}, namely, that $\OO$ can be defined as the real algebra with basis
$\lbrace e^x \colon x \in \F_8 \rbrace$,
with multiplication defined by 
\begin{equation}
e^x e^y = (-1)^{\varphi(x, y)} e^{x + y} \text{\; where \;} \varphi(x, y) = \op{tr}(y x^6)
\label{eq-definition-of-O}
\end{equation}
and $\op{tr}: \F_8 \to \F_2$ is the trace map: $x \mapsto x + x^2 + x^4$. 
\par
The definition of $\OO$
given above has a visible order--three symmetry $\op{Fr}$ corresponding to the Frobenius automorphism 
$x \mapsto x^2$ generating $\op{Gal}(\F_8/\F_2)$, and
a visible order--seven symmetry $\op{M}$ corresponding to multiplication by a generator of $\F_8^*$. 
Together they generate a group of order $21$ that acts simply transitively on 
the natural basis $B = \lbrace e^x \wedge e^y \colon x, y \in \F_8^*, x \neq y \rbrace$ of
$\wedge^2 \op{Im}(\OO)$.
The only element of $\F_8^*$ fixed by $\op{Fr}$ is $1$. Let $\lbrace 0, 1, x, y \rbrace \subseteq \F_8$ be a subset corresponding to any line of 
$P^2(\F_2)$ containing $1$. Let $B_0 \subseteq B$ be the Frobenius orbit of $e^x \wedge e^y$ and 
let $B_0, B_1 \dotsb, B_6$ be the seven translates of $B_0$ by the cyclic group $\langle \op{M} \rangle$.
Then $B$ is the disjoint union of $B_0, \dotsb B_6$.
\par
Using the well known natural surjection $D: \wedge^2 \op{Im}(\OO) \to \op{Der}(\OO)$,
we get a generating set $D(B)$ of $\op{Der}(\OO) $.
The kernel of $D$ has dimension seven with a basis
$ \lbrace \sum_{b \in B_i} b \colon i = 0, \dotsb 6 \rbrace$. 
The images of $B_0, \dotsb, B_6$ span seven mutually orthogonal Cartan subalgebras transitively
permuted by $\langle \op{M} \rangle$ and forming an orthogonal decomposition of $\op{Der}(\OO)$
in the terminology of \cite{KT:OD}.
We fix the Cartan subalgebra spanned by $D(B_0)$ because it is stable under the action of $\op{Fr}$.
The short coroots in this Cartan are $\lbrace \pm D(b) \colon b \in B_0 \rbrace$.
At this point, it is easy to write down explicit derivations of $\OO \otimes \C$ corresponding
to a Chevalley basis of $\mathfrak{g}_2$ by simultaneously diagonalizing the
action of the coroots (see the discussion preceding
Theorem \ref{th-Chevalley-basis}).
\par
The following symmetry considerations make our job easy.
The reflections in the short roots of $\mathfrak{g}_2$
generate an $S_3$ that has index $2$ in the Weyl group.
In our description, the action of this $S_3$ is a-priori visible. 
This $S_3$ is generated by $\op{Fr}$ acting as
$120$--degree rotation and
the complex conjugation on $\OO \otimes \C$ acting as negation.
 \end{topic}
\begin{definition} 
Let $a, b \in \OO$. Write $\op{ad}_a(b) =[a, b] = a b  - b a$.
Define $D(a, b): \OO \to \OO$ by
\begin{equation*}
D(a, b) = \tfrac{1}{4}( [\op{ad}_{a}, \op{ad}_b] + \op{ad}_{[a, b]}).
\end{equation*}
Clearly $D(a, b) =  - D(b, a)$, $D(a, b) 1 = 0$ and $D(1, a) = 0$.
So $D$ defines a linear map from $\wedge^2 \op{Im}(\OO)$ to $\op{End}(\OO)$
which we also denote by $D$. So $D(a, b) = D(a \wedge b)$.
\end{definition}
 {\bf Notation:} From here on, we shall write $\mathfrak{g} = \op{Der}(\OO) \otimes \C$.
 The Frobenius automorphism $\op{Fr}$
 acts on $\OO$, on $\mathfrak{g}$, on the roots of $\mathfrak{g}$, and so on.
 If $x$ is an element of any of these sets, we sometimes write $x'$ for its image under $\op{Fr}$.
 Choose $\alpha \in \F_8$ such that $\alpha^3 = \alpha + 1$. 
Write
\begin{equation*}
e_i = e^{\alpha^i} \text{\; and \;} e_{i j} = D(e_i \wedge e_j).
\end{equation*}
Note that $\op{Fr}: e_i \mapsto e_{2 i}$, that is, $e_i' = e_{2i}$,
 where the subscripts are read modulo $7$.
\begin{lemma}
Let $x$ and $y$ be distinct elements of  $\F_8^*$ and $z \in \F_8$. Then 
\begin{equation*}
D(e^x \wedge e^y) e^z =  
\begin{cases}
2e^y &\text{\; if \;} z = x, \\
-2e^x & \text{\; if \;} z = y, \\
0 & \text{\; if \;} z = 0 \text{\; or \;} z = x + y, \\
-(e^x e^y) e^z  &\text{\; otherwise}.
\end{cases} 
\end{equation*}
\label{l-formula-for-D}
\end{lemma}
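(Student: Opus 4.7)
The plan is to unpack the definition
\[
4 D(e^x, e^y) e^z = [e^x, [e^y, e^z]] - [e^y, [e^x, e^z]] + [[e^x, e^y], e^z]
\]
and handle the four cases separately, using only two properties of $\OO$: (i) distinct basis units $e^a, e^b$ with $a, b \in \F_8^*$ anticommute, so $[e^a, e^b] = 2 e^a e^b$ and $(e^a)^2 = -1$; and (ii) $\OO$ is alternative.

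The cases $z \in \{0, y, x+y\}$ will be quick. For $z = 0$ every term vanishes because $1$ is central. For $z = x + y$, note $[e^y, e^z] = 2 e^y e^{x+y}$ is a scalar multiple of $e^x$ (since $y + (x+y) = x$ in $\F_8$), so the outer commutator $[e^x, \cdot]$ vanishes; symmetrically $[e^y, [e^x, e^z]] = 0$; and $[[e^x, e^y], e^z] = 0$ because $[e^x, e^y] \propto e^{x+y} = e^z$. The case $z = y$ follows from $z = x$ by the antisymmetry $D(e^x \wedge e^y) = -D(e^y \wedge e^x)$.

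For $z = x$, the computation becomes tractable via alternativity: the Moufang identity $(ab)a = a(ba)$ together with $(e^x)^2 = -1$ give $(e^x e^y) e^x = e^x (e^y e^x) = -(e^x)^2 e^y = e^y$ and $e^x(e^x e^y) = (e^x)^2 e^y = -e^y$, and feeding these into the definition yields $D(e^x, e^y) e^x = 2 e^y$ on the nose.

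The main case is $z \in \F_8^*$ with $z \notin \{x, y, x+y\}$. Under this hypothesis every commutator that appears is between two distinct nonzero basis units — $y+z$, $x+z$, $x+y$ are all nonzero and distinct from $x$ and $y$ — so each reduces to twice a product, collapsing the expansion to
\[
4 D(e^x, e^y) e^z = 4 \bigl( e^x(e^y e^z) - e^y(e^x e^z) + (e^x e^y) e^z \bigr).
\]
Rewriting each term with the associator $[a,b,c] = (ab)c - a(bc)$ and its alternating property in an alternative algebra will reduce the desired equality $D(e^x, e^y) e^z = -(e^x e^y) e^z$ to the single identity
\[
[e^x, e^y, e^z] = 2 (e^x e^y) e^z \text{\quad whenever \;} x + y + z \ne 0.
\]
This last identity is the crux and the one place where the specific octonionic structure of $\OO$ really enters: it asserts that three imaginary basis units not on a Fano line satisfy $(e^x e^y) e^z = -e^x(e^y e^z)$. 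I would justify it either by invoking the standard Fano-plane description of octonion multiplication, or directly from the cocycle — both products are visibly $\pm e^{x+y+z}$, and after the signs are compared via $\varphi(u, v) = \op{tr}(v/u)$ and $\F_2$-linearity of the trace, the claim reduces to $\op{tr}\bigl( z \cdot (1/x + 1/y + 1/(x+y)) \bigr) = 1$ for $z \notin \{0, x, y, x+y\}$, a short trace computation in $\F_8$ (the $\F_2$-linear form $z \mapsto \op{tr}(uz)$ has a $4$-element kernel forced to coincide with $\{0, x, y, x+y\}$).
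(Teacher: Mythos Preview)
Your proof is correct and lands on the same key identity as the paper: both arguments reduce the generic case $z \notin \{0,x,y,x+y\}$ to showing $(e^x e^y)e^z = -e^x(e^y e^z)$, equivalently $[e^x,e^y,e^z] = 2(e^x e^y)e^z$. The paper states this as $\op{ad}_{[e^x,e^y]}e^z = 2[e^x,e^y,e^z]$ and leaves it as an easy verification from the cocycle \eqref{eq-definition-of-O}, while you spell out the verification via the trace form on $\F_8$.

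The main organizational difference is that the paper first derives the clean formula
\[
2D(a\wedge b) = \op{ad}_{[a,b]} - 3[a,b,\cdot],
\]
valid in any alternative algebra (from the alternating associator identity $R(a,b)c = -6[a,b,c]$), and then specializes; you instead expand the three commutator terms of $4D$ directly and handle the cases by hand. Your route is slightly more elementary in that it does not require recognizing the $R = -6[\cdot,\cdot,\cdot]$ identity up front, while the paper's formula is reusable and dispatches all the associative-subalgebra cases $z \in \F_2 x + \F_2 y$ in one stroke rather than four. One small point: your assertion that the kernel of $z \mapsto \op{tr}(uz)$ is ``forced'' to be $\{0,x,y,x+y\}$ still needs the check that $x$ (hence by symmetry $y$) actually lies in the kernel, i.e.\ that $\op{tr}(1) + \op{tr}(x/y) + \op{tr}(x/(x+y)) = 0$ in $\F_2$; this is indeed the short computation you promise, but it is not automatic from the kernel having four elements.
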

\begin{proof}Let $a, b \in \OO$. Define $R(a, b): \OO \to \OO$ by
$R(a, b)= [\op{ad}_{a}, \op{ad}_b] - \op{ad}_{[a, b]} $.
One verifies that $ R(a_1,a_2)(a_3) 
= - \sum_{\sigma \in S_3} \op{sign}(\sigma) [a_{\sigma(1)}, a_{\sigma(2)}, a_{\sigma(3)} ]$
where $[a, b, c] = (a b) c - a (b c)$ is the associator.
The properties of the associator in $\OO$ implies
$R(a_1,a_2)(a_3) = -6[a_1, a_2, a_3]$.
So
\begin{equation}
2 D(a \wedge b) =  \op{ad}_{[a, b]} +  \tfrac{1}{2} R(a, b) = \op{ad}_{[a, b] } - 3 [a, b, \cdot].
\label{e-second-formula-for-D}
\end{equation}
If $z \in \mathbb{F}_2 x + \mathbb{F}_2 y$, then $e^z$ belongs to the associative subalgebra spanned by $e^x$ and $e^y$
and the Lemma is easily verified in this case.  
If $z \notin \mathbb{F}_2 x +\mathbb{F}_2 y$, then using equation  \eqref{eq-definition-of-O} one easily verifies that
$\op{ad}_{[e^x, e^y]} e^z = 2[e^x, e^y, e^z]$. 
 The Lemma follows from this and equation \eqref{e-second-formula-for-D}.
\end{proof}
%
%
Let $a, b \in \OO$. Since the subalgebra of $\OO$ generated by $a$ and $b$ is associative, 
the maps $\op{ad}_{[a, b]} $ and $[\op{ad}_a, \op{ad}_b]$ agree on this subalgebra.
Note that the restriction of $2 D(a, b)$ to this subalgebra is just the inner derivation
 $\op{ad}_{[a, b]}$. In fact the following is well known: 
\begin{lemma}
If $a, b \in \OO$, then $D(a \wedge b)$ is a derivation of $\OO$.
\label{l-der}
\end{lemma}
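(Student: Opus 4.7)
The plan is to work with the reformulation $2 D(a \wedge b) = \op{ad}_{[a,b]} - 3[a, b, \cdot]$ established in equation \eqref{e-second-formula-for-D} and verify the Leibniz rule by reducing it to associator identities that hold in any alternative algebra.

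First, for a general element $c \in \OO$, I would compute the defect of $\op{ad}_c$ from being a derivation. With the convention $[u,v,w] = (uv)w - u(vw)$, a direct expansion yields
\begin{equation*}
[c, xy] - [c,x]\,y - x\,[c,y] = -[c,x,y] + [x,c,y] - [x,y,c].
\end{equation*}
The associator in $\OO$ is alternating in its three arguments, so the right side collapses to $-3[c,x,y]$. Taking $c = [a,b]$, the Leibniz rule for $2D(a \wedge b)$ then reduces to the single identity
\begin{equation*}
[a,b,xy] + \bigl[\,[a,b],\, x,\, y\,\bigr] = [a,b,x]\,y + x\,[a,b,y].
\end{equation*}

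Second, I would establish this identity. A short calculation expanding $[L_a, L_b]$, $[L_a, R_b]$, $[R_a, R_b]$ (with $L$, $R$ denoting left and right multiplication) and again using the alternating associator in fact shows that $2D(a \wedge b) = [L_a, L_b] + [L_a, R_b] + [R_a, R_b]$, the classical Schafer inner derivation of an alternative algebra. The fact that this combination is a derivation is proved in Schafer's \emph{Introduction to Nonassociative Algebras} by applying the linearized alternative laws to $D_{a,b}(xy) - D_{a,b}(x)\,y - x\,D_{a,b}(y)$ and showing that the result vanishes.

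The main obstacle will be deriving the associator identity above from the linearized alternative laws; it is standard but requires careful bookkeeping of signs and terms. Once it is in hand, the Leibniz rule follows formally. A brute-force alternative is to verify the Leibniz rule directly on the basis $\{e^x : x \in \F_8\}$ via Lemma \ref{l-formula-for-D}, case-splitting on how $u$, $v$, and $u+v$ compare to $x$, $y$, $x+y$, and $0$; this avoids invoking the general theory of alternative algebras but is laborious.
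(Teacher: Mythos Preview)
Your reduction is sound: the defect computation for $\op{ad}_c$ is correct, the collapse to $-3[c,x,y]$ via the alternating associator is right, and the resulting identity $[a,b,xy] + \bigl[[a,b],x,y\bigr] = [a,b,x]\,y + x\,[a,b,y]$ is exactly what is needed. Your check that $2D(a\wedge b) = [L_a,L_b] + [L_a,R_b] + [R_a,R_b]$ (using the alternating associator to rewrite $(ab)x$, $(ba)x$, $x(ab)$, $x(ba)$) also holds, so the appeal to Schafer's result on inner derivations of alternative algebras is legitimate.

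The paper takes the other route you sketch at the end. It reduces by linearity to $\mathfrak{d}=D(e^x\wedge e^y)$ for basis vectors, sets $L(z,w)=\mathfrak{d}(e^z)e^w+e^z\mathfrak{d}(e^w)-\mathfrak{d}(e^z e^w)$, and proposes checking $L(z,w)=0$ case by case via Lemma~\ref{l-formula-for-D}, with Lemma~\ref{l-LL} cutting down the number of cases (part~(b) propagates the identity from $(u,v)$ to $(u+v,u)$). The details are then omitted with a reference to Schafer. So your primary argument is the general alternative-algebra proof, while the paper's outlined argument is the $\F_8$--basis computation; your ``brute-force alternative'' is precisely the paper's intended path. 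Your route has the advantage of working uniformly in any alternative algebra and explaining \emph{why} the particular combination $\op{ad}_{[a,b]} - 3[a,b,\cdot]$ is a derivation; the paper's route stays self-contained within the explicit formula of Lemma~\ref{l-formula-for-D} and exploits the $\F_8$ symmetry, at the cost of being specific to this presentation of $\OO$.
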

By linearity it suffices to show that if $x, y$ are distinct elements of $\F_8^*$, then
$\mathfrak{d} = D(e^x \wedge e^y)$ is a derivation of $\OO$.
Write 
$L(z, w) = \mathfrak{d}(e^z) e^w + e^z \mathfrak{d}(e^w) - \mathfrak{d}(e^z e^w)$.
It suffices to prove that $L(z, w) = 0$ for all $z, w \in \F_8$. 
Only a few cases need to be checked if one first proves the following Lemma.
\begin{lemma}
(a) Suppose $u + x + y$ and $v$ are distinct elements of $\F_8^*$. 
Then $\mathfrak{d}(e^u)$ and $e^v$ anticommute.
\par 
(b) Suppose $u, v$ and  $x+y$ are three distinct elements of $\F_8^*$.
If $L(u, v) = 0$, then $L(u + v , u) = 0$.
\label{l-LL}
\end{lemma}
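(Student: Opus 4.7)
For part (a), the plan is a direct case analysis using Lemma~\ref{l-formula-for-D}. In each case, $\mathfrak{d}(e^u)$ is either zero (when $u \in \{0, x+y\}$) or a nonzero scalar multiple of a single imaginary basis element $e^w$: namely $w = y$ if $u = x$, $w = x$ if $u = y$, and $w = u + x + y$ otherwise (using that $(e^x e^y)\, e^u = \pm e^{u + x + y}$ when $u \notin \F_2 x + \F_2 y$). The hypothesis that $u + x + y$ and $v$ are distinct in $\F_8^*$ forces $w \neq v$ in every nontrivial case, so $e^w$ and $e^v$ are distinct elements of the orthonormal imaginary basis of $\OO$ and hence anticommute.

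For part (b), the plan is first to establish $L(u, u+v) = 0$ and then to deduce $L(u+v, u) = 0$ by antisymmetry. For the first step, the hypothesis $u \neq x+y$ lets us apply part (a) with ``$v$'' taken to be $u$, giving that $\mathfrak{d}(e^u)$ anticommutes with $e^u$. Setting $A = \mathfrak{d}(e^u)$, $B = e^u$, $C = e^v$ and expanding via the associator yields
\[ A(BC) + B(AC) = (AB + BA)C - [A,B,C] - [B,A,C] = 0, \]
using $AB + BA = 0$ and $[B,A,C] = -[A,B,C]$; that is, $\mathfrak{d}(e^u)(e^u e^v) + e^u(\mathfrak{d}(e^u) e^v) = 0$. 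Combined with the Leibniz rule from $L(u,v) = 0$, left-alternativity $e^u(e^u X) = -X$, and $e^{u+v} = \epsilon_{u,v}\, e^u e^v$ (where $\epsilon_{u,v} = (-1)^{\varphi(u,v)}$), a short substitution gives $\mathfrak{d}(e^u)\, e^{u+v} + e^u\, \mathfrak{d}(e^{u+v}) = -\epsilon_{u,v}\mathfrak{d}(e^v) = \mathfrak{d}(e^u e^{u+v})$ (using $e^u e^{u+v} = -\epsilon_{u,v}\, e^v$), proving $L(u, u+v) = 0$.

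To conclude $L(u+v, u) = 0$, I will compute $L(u+v, u) + L(u, u+v)$. The combined $\mathfrak{d}$-of-product term $-\mathfrak{d}(e^{u+v}e^u + e^u e^{u+v})$ vanishes because $e^u$ and $e^{u+v}$ are distinct nonzero basis vectors and anticommute. The cross-terms $\mathfrak{d}(e^u)\, e^{u+v} + e^{u+v}\, \mathfrak{d}(e^u)$ vanish by part (a) with $v$ replaced by $u+v$ (valid because $x+y \neq v$). Finally, $\mathfrak{d}(e^{u+v})\, e^u + e^u\, \mathfrak{d}(e^{u+v})$ vanishes either by part (a) when $u + v \neq x + y$, or trivially when $u + v = x + y$, because then $\mathfrak{d}(e^{u+v}) = \mathfrak{d}(e^{x+y}) = 0$ by Lemma~\ref{l-formula-for-D}. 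Hence $L(u+v, u) = -L(u, u+v) = 0$.

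The main obstacle I foresee is isolating the key identity $\mathfrak{d}(e^u)(e^u e^v) + e^u(\mathfrak{d}(e^u) e^v) = 0$ as a consequence of part (a) together with associator antisymmetry in the alternative algebra $\OO$; the small degenerate case $u + v = x + y$ at the end is handled cleanly by the vanishing $\mathfrak{d}(e^{x+y}) = 0$ given by Lemma~\ref{l-formula-for-D}.
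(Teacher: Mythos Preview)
Your argument is correct. Part (a) is exactly the case analysis Lemma~\ref{l-formula-for-D} invites, and in part (b) the key step---deducing $\mathfrak{d}(e^u)(e^u e^v)+e^u(\mathfrak{d}(e^u)e^v)=0$ from $\mathfrak{d}(e^u)e^u+e^u\mathfrak{d}(e^u)=0$ via the alternating associator---is clean; the subsequent substitution using $L(u,v)=0$ and left-alternativity checks out, as does the symmetrization giving $L(u+v,u)=-L(u,u+v)$, with the degenerate subcase $u+v=x+y$ handled by $\mathfrak{d}(e^{x+y})=0$.

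As for comparison with the paper: there is nothing to compare, since the paper explicitly omits the proof of this lemma, saying only that it ``can be directly proved using Lemma~\ref{l-formula-for-D}.'' Your write-up supplies precisely those omitted details, and in a way that is faithful to the paper's hint: part (a) is the direct application of Lemma~\ref{l-formula-for-D}, and part (b) leverages part (a) together with the alternative law, which is very much in the spirit of the surrounding discussion. One small presentational point: when you invoke part (a) for the pair $(\mathfrak{d}(e^{u+v}),e^u)$ in the nondegenerate case $u+v\neq x+y$, you might also note explicitly that $(u+v)+x+y\neq u$ follows from the hypothesis $v\neq x+y$; you rely on this but only state the other half of the distinctness condition.
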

One can directly prove Lemmas \ref{l-der} and \ref{l-LL} using Lemma \ref{l-formula-for-D}.
Since Lemma \ref{l-der} is well known (see \cite{S:N}), we shall omit the details of the proof 
and move on to describe the kernel of $D: \wedge^2 \op{Im}(\OO) \to \op{Der}(\OO)$.
Let $\op{M} : \F_8 \to \F_8$ be the automorphism $\op{M}(x) = \alpha x$. 
Let $\tau = \op{M}$ or $\tau = \op{Fr}$. 
Recall the multiplication rule of $\OO$ from equation \eqref{eq-definition-of-O}.
Note that $\varphi(\tau x, \tau y) = \varphi(x, y)$. 
It follows that $(a b)^{\tau} = a^{\tau} b^{\tau} $ for $a, b \in \OO$ where
$\tau$ acts on $\OO$ by $e^x \mapsto (e^x)^\tau =  e^{\tau x}$.
Since the derivations $D(a \wedge b)$ are
defined in terms of multiplication in $\OO$, it follows that 
$(D(a \wedge b)c)^{\tau} = D(a^{\tau} \wedge b^{\tau}) c^{\tau}$
for all $a, b, c \in \OO$ and thus, by linearity, 
\begin{equation*}
(D(w) c)^{\tau} = D(w^{\tau}) c^{\tau} \text{\; for all \;} 
w \in \wedge^2 \op{Im}(\OO), \; c \in \OO.
\end{equation*}
Let $\lbrace 0, 1, x, y \rbrace \subseteq \F_8$ be 
the subset corresponding to any line of $\mathbb{P}^2(\F_2)$ containing $1$.  
Define
\begin{equation*}
\Delta  = e^x \wedge e^y + (e^x \wedge e^y)' + (e^x \wedge e^y)'' \in \wedge^2 \op{Im}(\OO).
\end{equation*}
Note that the element $\pm \Delta$ is independent of choice of the line and choice of the 
ordered pair $(x, y)$, since the Frobenius action permutes the three lines containing $1$, and
interchanging $(x, y)$ changes $\Delta$ by a sign.
To be specific, we choose $(x, y) = (\alpha, \alpha^3)$. Then 
\begin{equation*}
\Delta = e_{1} \wedge e_3 + e_{2} \wedge e_6 + e_4 \wedge e_5.
\end{equation*}
\begin{lemma}
(a) $\op{ker}(D)$ has a basis given by $\Delta, \Delta^{\op{M}} , \dotsb, \Delta^{\op{M}^6} $. 
\par
(b) One has $[e_{13}, e_{26}] = 0$.
\label{l-kerD}
\end{lemma}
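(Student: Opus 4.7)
The plan for part (a) is to verify $D(\Delta) = 0$ by direct application of Lemma~\ref{l-formula-for-D}, then invoke symmetry and a dimension count. The three pairs $(\alpha, \alpha^3), (\alpha^2, \alpha^6), (\alpha^4, \alpha^5)$ all satisfy $x + y = 1$, so for $z \in \{0, 1\}$ each of the three summands of $D(\Delta)(e^z)$ vanishes by the first three cases of the lemma. For $z = \alpha^i$ with $1 \le i \le 6$, exactly one pair is ``resonant'' (contains $z$) and yields $\pm 2\, e^{1+z}$, while each of the other two pairs contributes $-(e^{x_j} e^{y_j}) e^z$. Tabulating $\op{tr}(\alpha^j)$ shows $\varphi(x_j, y_j) = \op{tr}(y_j x_j^6) = 0$ for each of the three pairs, so $e^{x_j} e^{y_j} = e^1$ and the two nonresonant terms sum to $-2\, e^1 e^z = -2(-1)^{\op{tr}(z)} e^{1+z}$. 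Cancellation then reduces to checking that the $x$'s $\alpha, \alpha^2, \alpha^4$ all have trace $0$ and the $y$'s $\alpha^3, \alpha^6, \alpha^5$ all have trace $1$, which is immediate.

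Once $D(\Delta) = 0$, the equivariance identity $(D(w) c)^\tau = D(w^\tau) c^\tau$ recorded just before the lemma gives $D(\Delta^{\op{M}^i}) = 0$ for all $i$. These seven elements are linearly independent because $\Delta^{\op{M}^i}$ is supported on $B_i$ and $B_0, \ldots, B_6$ partition $B$. Combined with $\dim \wedge^2 \op{Im}(\OO) = 21$, $\dim \op{Der}(\OO) = 14$, and the surjectivity of $D$ cited in the introduction, this forces $\dim \ker(D) = 7$, so $\Delta, \Delta^{\op{M}}, \ldots, \Delta^{\op{M}^6}$ form a basis.

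For part (b), the most transparent route is direct computation: tabulate $e_{13}(e^z)$ and $e_{26}(e^z)$ for each $z \in \{0, 1, \alpha, \ldots, \alpha^6\}$ using Lemma~\ref{l-formula-for-D}, then verify $e_{13} e_{26}(e^z) = e_{26} e_{13}(e^z)$ pointwise. Note that for $z \in \{\alpha^4, \alpha^5\}$ the two derivations act by the same formula $-e^1 e^z$ (neither pair contains these $z$), so they commute automatically on this subspace; the content of the calculation lies in the four cases $z \in \{\alpha, \alpha^2, \alpha^3, \alpha^6\}$, where one derivation acts ``resonantly'' and the other multiplicatively.

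The main obstacle throughout is sign bookkeeping coming from $\varphi(x, y) = \op{tr}(y x^6)$. This is cleanly handled once by listing the values of $\op{tr}(\alpha^j)$ for $j = 0, \ldots, 6$, after which both parts reduce to mechanical case analysis.
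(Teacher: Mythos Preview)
Your argument is correct. For part (a) you and the paper both compute $D(\Delta)=0$ via Lemma~\ref{l-formula-for-D}, but the paper exploits the Frobenius invariance $\Delta'=\Delta$ to reduce the check on $e^{\alpha^i}$ to just $i=1,3$, whereas you handle all six at once through your resonant/nonresonant bookkeeping; both are fine, and your version is more explicit at the cost of a little more case analysis. Your linear independence argument (disjoint supports on the $B_i$) and dimension count make explicit what the paper leaves implicit. The real methodological difference is in part (b): the paper first extracts from (a) the relation $e_{13}+e_{26}+e_{45}=0$, which forces $X=[e_{13},e_{26}]$ to be Frobenius invariant, so it suffices to check $X$ on a single Frobenius orbit of basis vectors ($e_0$, $e_1$, $e_3$). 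Your brute-force tabulation also works, but note that your shortcut on $z\in\{\alpha^4,\alpha^5\}$ tacitly uses that $-e^1\cdot$ maps $\operatorname{span}(e_4,e_5)$ to itself (since $1+\alpha^4=\alpha^5$ and $1+\alpha^5=\alpha^4$); without that invariance, two operators agreeing on a vector need not commute on it. Making that one observation explicit would close the only loose end in your write-up.
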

\begin{proof}
(a) Let $w \in \wedge^2 \op{Im}(\OO)$ and $c \in \OO$. 
Since $D(w) c = 0$ implies $D(w^{\op{M}}) c^{\op{M}} = 0$,
it suffices to show that $D(\Delta)= 0$.
Lemma \ref{l-formula-for-D} implies that if $\lbrace 0 , 1, x , y \rbrace \subseteq \F_8$ is 
a subset corresponding to a line in $P^2(\F_2)$, then
$D(e^x \wedge e^y ) e^1 = 0$, since
$x + y = 1$. So $D(\Delta) e^1 = 0$. 
Since $\Delta' = \Delta$, the equation $D(\Delta) e^{x} = 0$ implies
$ 0 = D( \Delta') (e^{x})' = D(\Delta) e^{x^2}$. 
So it suffices to show that $D(\Delta)$ kills $e^{\alpha}$ and $e^{\alpha^3}$.
This is an easy calculation using Lemma \ref{l-formula-for-D}.
This proves that $\Delta, \Delta^{\op{M}} , \dotsb, \Delta^{\op{M}^6} \in \op{ker}(D)$.
One verifies that these seven elements are linearly independent.
\par
(b)  Write $X = [e_{1 3}, e_{2 6}]$.
From part (a), we know that $e_{13} + e_{2 6} + e_{4 5} = 0$.
It follows that $[e_{1 3}, e_{2 6} ] = [e_{2 6}, e_{45}]$, that is, $X$ is Frobenius invariant. 
So it suffices to show that $X$ kills $e_0, e_1, e_3$.
The equation $X e_0 = 0$ is immediate. Verifying
$X e_1 = 0$ is an easy calculation using Lemma \ref{l-formula-for-D}.
The calculation for $e_3$ is identical to the calculation for $e_1$ since
 $e_{1 3} = -e_{3 1}$ and $e_{2 6} = -e_{6 2}$.
\end{proof}
\begin{definition}[The roots and coroots] 
Lemma \ref{l-kerD} implies that $(\C e_{1 3} + \C e_{2 6})$
 is an abelian subalgebra of $\mathfrak{g}$.
We fix this Cartan and call it $H$.
Fix a pair of coroots 
\begin{equation*}
H_{\pm \beta} = \pm H_{\beta} = \mp i e_{1 3}
\end{equation*}
in $H$.
Using Frobenius action, we obtain the six coroots
$ \pm  \lbrace H_{\beta}, H_{\beta}', H_{\beta}'' \rbrace$
corresponding to the short roots. 
The six coroots corresponding to 
the long roots are $ \pm \lbrace H_{\gamma} , H_{ \gamma}', H_{ \gamma}'' \rbrace$
where
\begin{equation*}
H_{ \pm \gamma} = \pm H_{\gamma} =  \pm \tfrac{1}{3} i (e_{13} -  e_{26}).
\end{equation*}
We shall define a basis of $\mathfrak{g}$ containing $H_{\beta}$ and 
$H_{\gamma}$.
The scaling factors like $\tfrac{1}{3}$ are chosen to make sure
that the structure constants of $\mathfrak{g}$ with respect to this basis are 
integers and are smallest possible.
Define roots $\beta, \gamma$ such that 
\begin{equation*}
\Bigl( \begin{smallmatrix} \beta(H_{\beta}) & \beta(H_{\gamma}) \\ \gamma(H_{\beta}) & \gamma(H_{\gamma}) \end{smallmatrix} \Bigr) 
= \bigl( \begin{smallmatrix} 2 & -1 \\ -3 & 2 \end{smallmatrix} \bigr)
\end{equation*}
is the Cartan matrix of $\mathfrak{g}_2$.
So $\lbrace \beta, \gamma \rbrace$ is a pair of simple roots with $\beta$ being the short root;
see Figure \ref{fig-roots-and-coroots}.
Let $\Phi_{\text{short}}$ be the set of six short roots and let $\Phi$ be the set of twelve roots of $\fg$.
Note that the Frobenius acts on $H$ by anti-clockwise rotation of $120$--degrees and complex
conjugation acts by negation. 
\par
Once the roots and coroots have been fixed, the weight space decompositions 
of the two smallest irreducible representations of $\mathfrak{g}$
can be found by simultaneously diagonalizing the actions of
$H_{\beta}$ and $H_{\gamma}$. These weight spaces are described below.
\end{definition}
\begin{figure}
\centerline{
 \begin{tikzpicture}[scale=.6]
  \begin{scope}[xshift=-6cm, scale = .5]
 \draw [arrows = {-Stealth[inset=2pt, length=7pt, open, round]}] (0,0) -- ( 4, 0); 
 \node [right] at (4,0) {\tiny $\beta $};
 \draw [arrows = {-Stealth[inset=2pt, length=7pt, open, round]}] (0,0) -- ( 2, 3.464);  
 \draw [arrows = {-Stealth[inset=2pt, length=7pt, open, round]}] (0,0) -- (-2, 3.464);
 \node [above left] at (-2,3.464) {\tiny $\beta' $};
 \draw [arrows = {-Stealth[inset=2pt, length=7pt, open, round]}] (0,0) -- (-4, 0);
  \draw [arrows = {-Stealth[inset=2pt, length=7pt, open, round]}] (0,0) -- (-2, -3.464);  
  \node [below left] at (-2,-3.464) {\tiny $\beta''$};
 \draw [arrows = {-Stealth[inset=2pt, length=7pt, open, round]}] (0,0) -- ( 2,-3.464);  
 \draw [arrows = {-Stealth[inset=2pt, length=7pt, open, round]}] (0,0) -- ( 6, 3.464);
    \node [above right] at (6,3.464) {\tiny $\gamma''$};    
 \draw [arrows = {-Stealth[inset=2pt, length=7pt, open, round]}] (0,0) -- ( 0, 6.928);  
 \draw [arrows = {-Stealth[inset=2pt, length=7pt, open, round]}] (0,0) -- (-6, 3.464);
  \node [above left] at (-6,3.464) {\tiny $\gamma$};   
 \draw [arrows = {-Stealth[inset=2pt, length=7pt, open, round]}] (0,0) -- (-6,-3.464);  
 \draw [arrows = {-Stealth[inset=2pt, length=7pt, open, round]}] (0,0) -- ( 0, -6.928);  
   \node [below] at (0,-6.928) {\tiny $\gamma'$};   
 \draw [arrows = {-Stealth[inset=2pt, length=7pt, open, round]}] (0,0) -- ( 6,-3.464); 
 \end{scope}
  \begin{scope}[xshift=6cm, scale = .5]
 \draw [arrows = {-Stealth[inset=2pt, length=7pt, open, round]}] (0,0) -- ( 4, 0); 
 \node [right] at (4,0) {\tiny $H_{\beta} =  -i e_{13} $};
 \draw [arrows = {-Stealth[inset=2pt, length=7pt, open, round]}] (0,0) -- ( 2, 3.464);  
 \draw [arrows = {-Stealth[inset=2pt, length=7pt, open, round]}] (0,0) -- (-2, 3.464);
 \node [above] at (-2.4,3.464) {\tiny $-i e_{26}  $};   
 \draw [arrows = {-Stealth[inset=2pt, length=7pt, open, round]}] (0,0) -- (-4, 0);
  \draw [arrows = {-Stealth[inset=2pt, length=7pt, open, round]}] (0,0) -- (-2, -3.464);  
  \node [below] at (-2.4,-3.464) {\tiny $-i e_{45}  $};   
 \draw [arrows = {-Stealth[inset=2pt, length=7pt, open, round]}] (0,0) -- ( 2,-3.464);  
 \draw [arrows = {-Stealth[inset=2pt, length=7pt, open, round]}] (0,0) -- ( 6, 3.464); 
 \draw [arrows = {-Stealth[inset=2pt, length=7pt, open, round]}] (0,0) -- ( 0, 6.928);  
 \draw [arrows = {-Stealth[inset=2pt, length=7pt, open, round]}] (0,0) -- (-6, 3.464);
  \node [above] at (-6.6,3.464) {\tiny $3 H_{\gamma}  $};   
 \draw [arrows = {-Stealth[inset=2pt, length=7pt, open, round]}] (0,0) -- (-6,-3.464);  
 \draw [arrows = {-Stealth[inset=2pt, length=7pt, open, round]}] (0,0) -- ( 0, -6.928);  
 \draw [arrows = {-Stealth[inset=2pt, length=7pt, open, round]}] (0,0) -- ( 6,-3.464); 
 \end{scope}
 \end{tikzpicture}
 }
 \caption{The roots on the left and the coroots on the right.}
 \label{fig-roots-and-coroots}
 \end{figure}
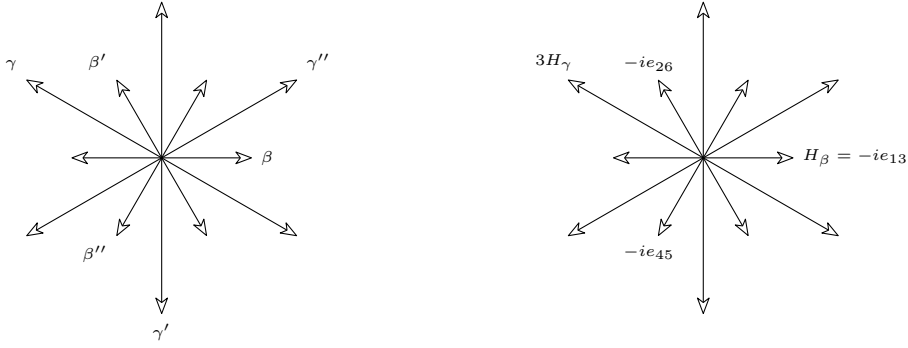
\begin{topic}{\bf The standard representation:}
Write $V = \op{Im}(\OO) \otimes \C$. This is the standard representation of $\fg$. 
Define the vectors $ v_0, v_{\pm \beta},  v'_{\pm \beta}, v''_{\pm \beta}$ in $V$  by choosing 
\begin{equation*}
v_0 = e_0 \text{\; and \;} v_{\pm \beta} = (\pm i e_1 + e_3).
\end{equation*}
One easily verifies that $v_{\psi}$ spans the weight space $V_{\psi}$ for each short root $\psi$.
See Figure \ref{fig-standard-rep}. One has the weight space decomposition:
$V = \C v_0 \oplus \bigl( \oplus_{\psi \in \Phi_{\text{short}}} \C v_{\psi} \bigr)$.
\end{topic}
\begin{topic}{\bf The adjoint representation: }
If $\psi$ is a short root of $\mathfrak{g}$, define
\begin{equation*}
E_{\psi} = \tfrac{1}{2}D(v_0 \wedge v_{\psi}).
\end{equation*}
If $\op{\nu}$ is a long root of $\mathfrak{g}$,
then there exists a unique short root $\psi$ such that $\nu = \psi - \psi'$. Define
\begin{equation*}
E_{\nu}  = \tfrac{1}{6} D(v_{\psi} \wedge v'_{-\psi}).
\end{equation*}
One easily verifies that $E_{\rho}$ spans the root space $\fg_{\rho}$ for each root $\rho \in \Phi$.
One has the root space decomposition: 
$\mathfrak{g} = H \oplus \bigl( \oplus_{\rho \in \Phi} \C E_{\rho} )$.
Note that
\begin{equation*}
E_{ \beta} = \tfrac{1}{2} D( v_0 \wedge v_{\beta} ) = \tfrac{1}{2} D( e_0 \wedge (i e_1 + e_3) ) 
=\tfrac{1}{2}(- i e_{1 0} + e_{0 3}),
\end{equation*}
and 
\begin{equation*}
E_{\gamma}  
= \tfrac{1}{6} D( v_{-\beta} \wedge v_{\beta}') 
= \tfrac{1}{6} D( (-i e_1 + e_3) \wedge (i e_2 + e_6))
=\tfrac{1}{6} (e_{12} + e_{36} -i( e_{23} + e_{16})  ).
\end{equation*}
To write down the other $E_{\rho}$'s, apply the $S_3$ symmetry generated by
complex conjugation and Frobenius. 
\end{topic}
\begin{theorem}
The set 
$\lbrace H_{\beta}, H_{\gamma} \rbrace \cup \lbrace E_{\rho} \colon \rho \in \Phi \rbrace$
is a Chevalley basis of $\fg$.
\label{th-Chevalley-basis}
\end{theorem}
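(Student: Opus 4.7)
The plan is to verify the four defining properties of a Chevalley basis in turn. First, that $H_\beta, H_\gamma$ form a basis of an abelian Cartan subalgebra of $\fg$, which is immediate from Lemma \ref{l-kerD}(b) together with the definitions. Second, that each $E_\rho$ is a nonzero vector in the root space $\fg_\rho$; this was asserted in the discussion preceding the theorem and is verified by simultaneous diagonalization of the $H$-action using Lemma \ref{l-formula-for-D}, and it automatically supplies $[H, E_\rho] = \rho(H) E_\rho$ for every $H \in H$. Third, that $[E_\rho, E_{-\rho}] = H_\rho$ for every root $\rho$. Finally, that for non-opposite roots $\rho, \sigma$ with $\rho + \sigma \in \Phi$ one has $[E_\rho, E_\sigma] = N_{\rho, \sigma} E_{\rho+\sigma}$ with $N_{\rho, \sigma} = \pm(p+1)$ an integer, where $p$ is the largest integer such that $\sigma - p\rho \in \Phi$; in $G_2$ these structure constants can only be $\pm 1, \pm 2, \pm 3$.

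The main tool for cutting down the work is the $S_3$ symmetry generated by $\op{Fr}$ and complex conjugation on $\fg$. All scalars appearing in $H_\beta, H_\gamma, E_\rho$ are $\op{Fr}$-invariant real numbers, and complex conjugation sends $i \mapsto -i$ while fixing each $e_{ij}$. From this one reads off that $\op{Fr}$ permutes the $E_\rho$ by the Frobenius action on $\Phi$, while complex conjugation sends $E_\rho \mapsto E_{-\rho}$ and $H_\rho \mapsto H_{-\rho}$. The Chevalley identities are therefore $S_3$-equivariant, so it suffices to check them on one representative of each $S_3$-orbit of pairs of roots. Concretely, property (iii) reduces to computing $[E_\beta, E_{-\beta}]$ and $[E_\gamma, E_{-\gamma}]$, and property (iv) reduces to computing the single bracket $[E_\beta, E_\gamma]$, after which the remaining structure constants along the $\beta$-string through $\gamma$ are forced by repeated Jacobi and the eigenvalue equations.

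All of these brackets are computed mechanically from Lemma \ref{l-formula-for-D}. Each $E_\rho$ is an explicit $\C$-linear combination of operators $e_{ij} = D(e_i \wedge e_j)$, each $e_{ij}$ sends a basis vector $e_z$ either to a scalar multiple of some $e_w$ or to zero, and so a commutator $[e_{ij}, e_{kl}]$ applied to $e_z$ reduces to at most a handful of terms, with the signs dictated by \eqref{eq-definition-of-O} and the field arithmetic in $\F_8$. Every verification is thus an exercise in tracking which indices in $\F_8$ coincide and applying the four-case formula of Lemma \ref{l-formula-for-D}.

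The main obstacle is bookkeeping rather than conceptual: keeping track of the factors of $i$, the signs $(-1)^{\vf(x, y)}$, and checking that the normalizations $\tfrac{1}{2}$ in the short-root vectors, $\tfrac{1}{6}$ in the long-root vectors, and $\tfrac{1}{3}$ in $H_\gamma$ conspire to produce exactly integer structure constants. The computation $[E_\beta, E_{-\beta}] = H_\beta$ is what fixes the scale of $E_\beta$, and $[E_\gamma, E_{-\gamma}] = H_\gamma$ fixes the interlocked scales of $E_\gamma$ and $H_\gamma$; once these are in place, the appearance of the characteristic constant $\pm 3$ in a bracket such as $[E_\beta, E_{2\beta + \gamma}] = \pm 3\, E_{3\beta + \gamma}$ is the only remaining sensitive check, and everything else follows by $S_3$-equivariance and Jacobi.
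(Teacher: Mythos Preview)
Your proposal is correct and follows essentially the same route as the paper: verify the Chevalley commutation relations by computing brackets through the faithful action on the $7$-dimensional representation $V$ using Lemma~\ref{l-formula-for-D}, with the $S_3$ symmetry generated by $\op{Fr}$ and complex conjugation cutting the number of cases down to a handful. The only cosmetic difference is that the paper first packages the action on $V$ in the weight basis $v_\psi$ (Remark~\ref{t-action-on-V}, equation~\eqref{eq-action-on-V}), which streamlines the bookkeeping you describe in terms of the raw $e_z$'s and $e_{ij}$'s.
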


\begin{proof}[Remark on proof]
Checking that these generators of $\mathfrak{g}$ obey the commutation rules dictated by the root space
decomposition is a routine verification using their action on the standard representation 
$V$ as described in remark \ref{t-action-on-V}. Because of the visible $S_3$ symmetry of our
construction, only few cases need to be checked. 
\end{proof}
\par
{\bf Warning:} 
Identify $(\wedge^2 \op{Im}(\OO) \otimes \C)$
 with $\mathfrak{so}_7(\C)$ in the standard manner (see \cite{FH:RT}, page 303)
so that $e_i \wedge e_j$ gets identified with the skew symmetric matrix $2(\mathcal{E}_{i j} - \mathcal{E}_{j i})$
where $\mathcal{E}_{i j}$ is the matrix with rows and columns indexed by $\Z/7 \Z$
whose only nonzero entry is $1$ in the $(i, j)$-th slot.
Let $\nu$ be a long root. Write $\nu  = \psi - \psi'$ for a short root $\psi$.
It is curious to note that  
\begin{equation*}
[ D(v_0 \wedge v_{\psi}), D(v_0 \wedge v_{-\psi}') ]_{\mathfrak{g}}
=4 [E_{\psi}, E_{-\psi'}]_{\mathfrak{g}} 
=12 E_{\nu}
=- D [v_0 \wedge v_{\psi},v_0 \wedge v_{-\psi}']_{\mathfrak{so}_7(\C)},
\end{equation*}
even though $-D$ is not a Lie algebra homomorphism.
%
\begin{remark}[Action of the Chevalley basis on the standard representation]
\label{t-action-on-V}
The action of the vectors
$\lbrace E_{\rho} \colon \rho \in \Phi \rbrace$
on the weight vectors
$\lbrace v_0 \rbrace \cup \lbrace v_{\psi} \colon \psi \in \Phi_{\text{short}} \rbrace$
is determined up to scalars by weight consideration
since $[\fg_{\rho}, V_{\rho'}] \subseteq V_{\rho + \rho'}$ and each weight space $V_{\rho}$ is at most
one dimensional. The non-trivial scalars are determined by the following rules:
Let $\psi$ be a short root and let $\rho$ be a root such that $\psi + \rho$ is also a short root. Then 
\begin{equation}
 E_{\psi} v_0 = v_{\psi}, \;\; E_{\psi} v_{-\psi} = -2 v_0, \;\; \text{\; and \;}
E_{\rho} v_{\psi} = \pm v_{\rho + \psi}
\label{eq-action-on-V}
\end{equation}
 where the plus sign holds if and only if 
$v_{\rho + \psi}$ is equal to $v'_{\psi}$ or $-v''_{\psi}$. In other words, the plus sign
holds if and only if the movement from $\psi$ to $(\rho + \psi)$
in the direction of $\rho$ defines an anti-clockwise rotation of angle less than $\pi$ around the origin. 
The relations in equation \eqref{eq-action-on-V} are easily verified using Lemma \ref{l-formula-for-D}.
Only a few relations need to be checked, because of the $S_3$ symmetry.
The nontrivial scalars involved in this action are indicated in Figure \ref{fig-standard-rep}
next to the dashed arrows. For example, the $-2$ next to the horizontal arrow means 
that $E_{\beta} v_{-\beta} = -2 v_0$.
\end{remark}
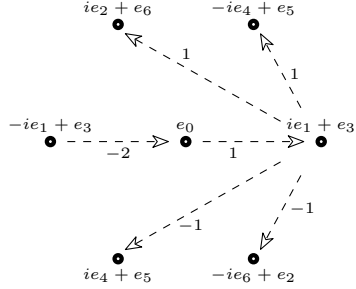
\begin{figure}
\centerline{
 \begin{tikzpicture}[scale=.45]
  \begin{scope}[ scale = 1]
 \draw [ultra thick] (0,        0) circle (3pt); \node [above] at (0,0) {\tiny $e_{0} $};
 \draw [ultra thick] ( 4,0       ) circle (3pt); \node [above] at ( 4, 0       ) {\tiny $ i e_{1} + e_{3} $};
 \draw [ultra thick] ( 2,3.464) circle (3pt); \node [above] at ( 2, 3.464) {\tiny $ -i e_{4} + e_{5} $};
 \draw [ultra thick] (-2,3.464) circle (3pt); \node [above] at (-2, 3.464) {\tiny $ i e_{2} + e_{6} $};
 \draw [ultra thick] (-4,0       ) circle (3pt); \node [above] at (-4, 0       ) {\tiny $ -i e_{1} + e_{3} $}; 
\draw [ultra thick] (-2,-3.464) circle (3pt); \node [below] at (-2,-3.464) {\tiny $ i e_{4} + e_{5} $};
\draw [ultra thick] ( 2,-3.464) circle (3pt); \node [below] at ( 2,-3.464) {\tiny $ -i e_{6} + e_{2} $};
 \draw [dashed, arrows = {-Stealth[inset=2pt, length=7pt, open, round]}] (.5,0) -- (  3.5, 0);  
 \node [below] at (1.4,.1) {\tiny $1$};
  \draw [dashed, arrows = {-Stealth[inset=2pt, length=7pt, open, round]}] (-3.5,0) -- (  -.5, 0);  
  \node [below] at (-2,.1) {\tiny $-2$};
  \draw [dashed, arrows = {-Stealth[inset=2pt, length=7pt, open, round]}] (3.4,1) -- ( 2.2,3.2);  
 \node [ right] at (2.8,2) {\tiny $1$};
  \draw [dashed, arrows = {-Stealth[inset=2pt, length=7pt, open, round]}] (2.8,.6) -- (-1.8, 3.2);  
 \node [ right] at (-.4,2.6) {\tiny $1$}; 
   \draw [dashed, arrows = {-Stealth[inset=2pt, length=7pt, open, round]}] (3.4,-1) -- ( 2.2,-3.2);  
 \node [ right] at (2.8,-2) {\tiny $-1$};
  \draw [dashed, arrows = {-Stealth[inset=2pt, length=7pt, open, round]}] (2.8,-.6) -- (-1.8, -3.2);  
 \node [ right] at (-.5,-2.5) {\tiny $-1$}; 
  \end{scope}
 \end{tikzpicture}
 }
 \caption{Basis for weight spaces in the standard representation $\op{Im}(\OO) \otimes \C$.
 The numbers next to the dashed arrows indicate the scalars involved in action of 
some of the $E_{\psi}$'s as stated in equation \eqref{eq-action-on-V}.
The rest can be worked out from weight consideration
and Weyl group symmetry. }
 \label{fig-standard-rep}
 \end{figure}
\begin{topic}{\bf The irreducible representations of $\mathfrak{g}_2$:}
We finish by describing the finite dimensional irreducible representations of $\mathfrak{g}$
in terms of the standard representation $V$. This was worked out in \cite{HZ:W}.
The description given below follows quickly from the results of \cite{HZ:W}.
\par
Fix the simple roots $\lbrace \beta, \gamma \rbrace$ as in Figure \ref{fig-roots-and-coroots}.
Then the fundamental
weights of $\mathfrak{g}$ are $\mu_1 = -\beta''$ and $\mu_2 = -\gamma'$.
For each non-negative integer $a, b$, let $\Gamma_{a, b}$ denote the finite dimensional
irreducible representation of $\mathfrak{g}$ with highest weight $(a \mu_1 + b \mu_2)$.
The two smallest ones are the standard representation
$V = \Gamma_{1,0}$ and the adjoint representation $\mathfrak{g} = \Gamma_{0,1}$.
\par
Let $\lambda$ be the Young tableau having two rows, corresponding to the partition $(a+b, b)$.
Then $\Gamma_{a, b}$ can be realized as a subspace of the Weyl module $\mathbb{S}_{\lambda}(V)$;
see \cite{HZ:W}, Theorem 5.5.
From \cite{F:YT}, chapter 8, recall 
that the vectors in $\mathbb{S}_{\lambda}(V)$ can be represented in the form 
\begin{equation}
 w =
  \begin{smallmatrix}
w_{1,1} & w_{1,2} & \dotsb & w_{1,b} & \dotsb w_{1,a+b} \\
w_{2,1} & w_{2,2} & \dotsb & w_{2,b} & 
\end{smallmatrix}
\label{eq-w}
\end{equation}
where $w_{i, j} \in V$, modulo the following relations:
\begin{itemize}
\item Interchanging the two entries of a column negates the vector $w$.
\item Interchanging two columns of the same length does not change $w$.
\item For each $1 \leq j \leq b$ and $j <   k \leq a + b$, let $z_1$ (resp. $z_2$)
be the vector obtained from $w$ by interchanging
$w_{1,k}$ with $w_{1,j}$ (resp. $w_{2,j}$). Then $w = z_1 + z_2$.
\end{itemize}
These relations are the {\it exchange conditions}
of \cite{F:YT}, page 81, worked out in our situation. 
\par
The natural surjection from $\otimes^{ a+2 b}V \to \mathbb{S}_{\lambda}(V)$ induces
the $\mathfrak{g}$--action on $ \mathbb{S}_{\lambda}(V)$.
Note that the highest weight of $\Gamma_{a, b}$ is 
$(a \mu_1 + b \mu_2) = (a + b) (-\beta'')  + b \beta'$.  From Figure \ref{fig-standard-rep}, 
recall that  $ v_{-\beta}'' = - i e_4 + e_5$
and $v_{\beta}' = i e_2 + e_6$.
Let $w_{\lambda} \in \mathbb{S}_{\lambda}(V)$ be the vector written in the form
given in equation \eqref{eq-w} whose first row entries are all
equal to $(- i e_4 + e_5)$ and whose second row entries are all equal to $(i e_2 + e_6)$.
Then we find that $w_{\lambda}$ has weight $(a \mu_1 + b \mu_2)$. 
So  $\Gamma_{a, b} = U(\mathfrak{g}) w_{\lambda}$, and
$w_{\lambda}$ is the highest weight vector of $\Gamma_{a, b}$.
\end{topic}
{\bf Acknowledgement:} I would like to thank Jonathan Smith and Jonas Hartwig
for many interesting discussions and helpful suggestions.

\end{document}